\newtheorem{theorem}{Theorem}[section]
\newtheorem{corollary}[theorem]{Corollary}
\newtheorem{proposition}[theorem]{Proposition}
\newtheorem{definition}[theorem]{Definition}
\newcommand{\w}{\omega}
\newcommand\g{{\mathfrak g}}
\newcommand\G{{\mathfrak G}}
\newcommand\ag{{\mathfrak a}}
\begin{document}

{\bf \large
\centerline{N.~K.~Smolentsev}

\vspace{3mm}
\centerline{Left-invariant almost para-K\"{a}hler  structures}
\centerline{on six-dimensional nilpotent Lie groups}}
\vspace{3mm}

\begin{abstract}
In this paper, we consider left-invariant para-complex structures on six-dimensional nilpotent Lie groups. A complete list of six-dimensional nilpotent Lie groups that admit para-K\"{a}hler structures is obtained, explicit expressions for para-complex structures are found, and curvature properties of associated para-K\"{a}hler metrics are investigated. It is shown that paracomplex structures are nilpotent and the corresponding para-K\"{a}hler metrics are Ricci-flat.
\end{abstract}

\section{Introduction} \label{Intro}
As is known \cite{BG}, nilpotent Lie groups, with the exception of the Abelian case, do not admit left-invariant positive-definite K\"{a}hler metrics. However, pseudo-K\"{a}hlerian (i.e. with a pseudo-Riemannian metric) structures may exist.
In \cite{CFU}, a complete list of 13 classes of non-commutative six-dimensional nilpotent Lie groups was obtained that admits pseudo-K\"{a}hler structures. In \cite{Smolen-11}, a more complete study of the above classes of six-dimensional pseudo-K\"{a}hlerian nilpotent Lie groups was carried out. Recently, para-complex and para-K\"{a}hler structures have been of great interest. Therefore, the question of invariant para-K\"{a}hler structures on six-dimensional nilpotent Lie groups is natural.
In this article, we will show that 15 classes of non-commutative six-dimensional nilpotent Lie groups admit para-K\"{a}hler structures.

We recall the main concepts and facts that will be used in the work.
An almost para-complex structure on a $2n$-dimensional manifold $M$ is a field $J$ of endomorphisms $J: TM\to TM$ such that $J^2 = Id$, and the ranks of the eigen-distributions $T^\pm:= {\rm ker}(Id\mp J)$ are equal. An almost paracomplex structure $J$ is said to be integrable if the distributions $T^\pm$ are involutive.
In this case, the $J$ is called a para-complex structure.
The Nijenhuis tensor $N$ of an almost paracomplex structure $J$ is defined by the equality $N_J(X,Y) = [X,Y] + [JX,JY] - J[JX,Y] - J[X, JY]$, for any vector fields $X, Y$ on $M$.
As in the complex case, a para-complex structure $J$ is integrable if and only if $N_J = 0$.
A para-K\"{a}hler manifold can be defined as a symplectic manifold $(M, \w)$ with a compatible para-complex structure $J$, i.e. such that $\w(JX,JY)= -\w(X,Y)$.
In this case, the metric $g(X,Y)= \w(X,JY)$ is defined on $M$, which is a pseudo-Riemannian neutral signature. Note that $g(JX,JY) = -g(X,Y)$.
The paper \cite{Aleks} presents a survey of the theory of para-complex structures and considers invariant para-complex and para-K\"{a}hler structures on homogeneous spaces.

Note that the phrase "para-K\"{a}hlerian manifold" is also used in another sense. A. Gray in \cite{Gray} noted that the remarkable geometric and topological properties of K\"{a}hler manifolds are largely due to the fact that the curvature tensor $R$ satisfies the special K\"{a}hler identity: $g(R(X,Y)Z,W) = g(R(X,Y)JZ,JW)$ for any vector fields $X,Y,Z,W$, where $J$ is a complex structure tensor compatible with the Riemannian metric $g$.
However, the class of manifolds with this property is somewhat wider.
In Rizza's 1974 \cite{Rizza} paper, almost Hermitian manifolds satisfying the above K\"{a}hler identity are called para-K\"{a}hler manifolds.
There are many papers devoted to the study of such para-K\"{a}hler manifolds, see, for example, \cite{Banaru} and \cite{Schafer}, where one can also find references to classical and more recent papers.

In this paper, the para-K\"{a}hler manifolds are considered from the point of view of paracomplex geometry. %which was introduced by Lieberman \cite{Liberman} in 1952 by analogy with complex geometry.
For more details on such a para-K\"{a}hlerian geometry, see in review \cite{Aleks}. Note for comparison that such para-K\"{a}hler manifolds satisfy \cite{Aleks} the following identity: $g(R(X,Y)Z,W) =-g(R(X,Y)JZ,JW)$, where $J$ is the tensor of the para-complex structure, compatible with the pseudo-Riemannian metric $g$.

We will consider left-invariant (almost) para-complex structures on the Lie group $G$, which are given by the left-invariant endomorphism field $J:TG \to TG$ of the tangent bundle $TG$.
Since such tensor $J$ is defined by a linear operator on the Lie algebra $\g = T_eG$, we will say that $J$ is an invariant almost para-complex structure on the Lie algebra $\g$.
In this case, the integrability condition for $J$ is also formulated at the Lie algebra level: $N_J(X,Y) = [X,Y] + [JX,JY] - J[JX,Y] - J[X, JY]=0$, for any $X,Y\in \g$. It follows from the integrability condition for $J$ that the eigenspaces $\g^+$ and $\g^-$ of the operator $J$ are subalgebras.
Therefore, the para-complex Lie algebra $\g$ can be represented as a direct sum of two subalgebras:
$$
\g = \g^+ \oplus \g^-.
$$
We also note that for an element $Z$ of the center $\mathcal{Z}(\g)$ of the Lie algebra, the vector $JZ$ may not be central, but it immediately follows from the integrability condition that $ad_{JZ}$ commutes with $J$:
$$
[JZ,JX] =J[JZ,X], \quad   ad_{JZ}\cdot J = J \cdot ad_{JZ}.
$$

A left-invariant symplectic structure $\w$ on a Lie group $G$ is given by a 2-form $\w$ of maximum rank on the Lie algebra $\g$ and satisfying the condition $\w([X,Y],Z) -\w([X,Z],Y) + \w([Y,Z],X) = 0$, $\forall X,Y,Z \in \g$.
In this case, the Lie algebra $\g$ will be called symplectic.

Recall that a subspace $W\subset \g$ is called $\w$-isotropic if and only if $\w(W,W) = 0$ and $W$ is called $\w$-Lagrangian if it is $\w$-isotropic and $\w(W,u) = 0$ implies that $u\in W$.
Subspaces $U,V \subset W$ of a symplectic space $(W,\w)$ is called $\w$-dual if for any vector $u\in U$ there exists a vector $v\in V$ such that $\w(u,v) \ne 0$ and vice versa, $\forall v\in V$, $\exists u \in U$, $\w(u,v) \ne 0$.

A left-invariant para-K\"{a}hler structure on a Lie group is given by a pair $(\w, J)$ consisting of a symplectic form $\w$ on $\g$ and a para-complex structure $J$ on $\g$ compatible with $\w$, i.e., such that $\w(JX,JY) =-\w(X,Y)$.
A compatible pair $(\w, J)$ defines a para-K\"{a}hlerian pseudo-Riemannian metric $g(X,Y) = \w(X,JY)$ on $\g$.
Moreover, the subalgebras $\g^+$ and $\g^-$ are isotropic for the metric $g$ and $\w$-Lagrangian.

The lower central series of the Lie algebra $\g$ is a decreasing sequence of ideals $C^0\g$, $C^{1}\g, \dots $ defined inductively: $C^0\g=\g$ $C^{k+1}\g = [\g, C^{k}\g]$.
A Lie algebra $\g$ is called to be nilpotent if $C^{k}\g = 0$ for some $k$. For a nilpotent Lie algebra, there is also an increasing central sequence of ideals $\{\g_k\}$: $\g_0 = \{0\}\subset \g_1 \subset \g_2 \subset \dots \subset \g_{s-1} \subset \g_p = \g$, where the ideals $\g_k$ are defined inductively by the rule:
$$
\g_k = \{X\in \g\, |\, [X,\g] \subset \g_{k-1}\}, k\ge 1.
$$
In particular, $\g_1 = \mathcal{Z}(\g)$ is the center of the Lie algebra.
Moreover, the first derived ideal $C^1\g$ is included in the ideal $\g_{p-1}$.
For a given almost para-complex structure $J$, the ideals $\{\g_k\}$ defined above are generally not $J$-invariant.
One can define an increasing sequence of $J$-invariant ideals $\ag_k(J)$ as follows:
$$
\ag_0(J) = {0}, \ag_(J) = \{X \in \g\, |\, [X,\g] \subset \ag_{k-1}(J) \mbox{ and } [JX, \g] \subset \ag_{k-1}(J)\}, k \ge 1.
$$

It is clear that $\ag_k(J) \subset \g_k$ for $k \ge 1$.
Obviously, the ideal $\ag_1(J)$ lies at the center $\mathcal{Z}(\g) = \g_1$ of the Lie algebra $\g$.

\begin{definition}
A left-invariant almost para-complex structure $J$ is called nilpotent if for a sequence of ideals $\{\ag_k(J)\}$ there exists a number $s$ such that $\ag_s(J) = \g$.
\end{definition}

Let $\nabla$ be the Levi-Civita connection corresponding to the left-invariant pseudo-Riemannian metric $g$.
It is determined from the six-term formula \cite{KN}, which for left-invariant vector fields $X,Y,Z$ on a Lie group takes the form: $2g(\nabla_X Y,Z) = g([X,Y],Z) + g([Z,X],Y) + g([Z,Y],X)$.
Recall that the curvature tensor $R(X,Y)$ is defined by the formula $R(X,Y) = [\nabla_X ,\nabla_Y] - \nabla_{[X,Y]}$, and the Ricci tensor $Ric$ is the convolution of the curvature tenor with respect to the first lower and upper indices, $Ric_{jk}=R_{ijk}^i$.
The metric $g$ is called Ricci-flat if $Ric =0$.

\section{Left-invariant symplectic and para-K\"{a}hler structures on Lie algebras} \label{Sec-1}
Let $\g$ be a Lie algebra with a symplectic form $\w$, an almost paracomplex structure $J$ compatible with $\w$, and a pseudo-Riemannian metric $g(X,Y) = \w(X,JY)$.
Let us present some simple facts about the first derived ideal $C^1(\g)$, the center $\mathcal{Z}(\g)$ of the Lie algebra $\g$, and the ideals $\{\g_k\}$ and $\{\ag_k(J)\}$.

\begin{proposition}\label{Prop-1}
$\w(C^1(\g), \mathcal{Z}(\g)) = 0$ for any symplectic form $\w$ on $\g$.
\end{proposition}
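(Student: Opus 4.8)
The plan is to use the single defining identity of a symplectic form on a Lie algebra — the $2$-cocycle condition $\w([X,Y],Z) -\w([X,Z],Y) + \w([Y,Z],X) = 0$ for all $X,Y,Z\in\g$ — and specialize the third argument to a central element.

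Concretely, I would first recall that $C^1(\g)=[\g,\g]$ is the linear span of all brackets $[X,Y]$ with $X,Y\in\g$, so by bilinearity of $\w$ it suffices to show $\w([X,Y],Z)=0$ for every $X,Y\in\g$ and every $Z\in\mathcal{Z}(\g)$. Fix such $X,Y$ and $Z$. Since $Z$ is central, $[X,Z]=0$ and $[Y,Z]=0$. Substituting $Z$ into the cocycle identity, the last two terms $\w([X,Z],Y)$ and $\w([Y,Z],X)$ vanish, leaving $\w([X,Y],Z)=0$. As $X,Y$ were arbitrary, $\w(C^1(\g),Z)=0$, and letting $Z$ range over $\mathcal{Z}(\g)$ gives $\w(C^1(\g),\mathcal{Z}(\g))=0$.

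There is essentially no obstacle here: the statement is an immediate consequence of the cocycle condition, and nondegeneracy (maximal rank) of $\w$ is not even needed. The only point requiring a word of care is the reduction to brackets: one should note that $\w$ being bilinear (and the fact that an arbitrary element of $C^1(\g)$ is a finite linear combination $\sum_i [X_i,Y_i]$) lets us argue termwise, so that it is enough to verify the claim on generators $[X,Y]$.
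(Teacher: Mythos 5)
Your proof is correct and follows the same route as the paper: specialize the cocycle identity $\w([X,Y],Z)-\w([X,Z],Y)+\w([Y,Z],X)=0$ to a central $Z$, so the last two terms vanish and $\w([X,Y],Z)=0$. The only difference is that you spell out the (routine) reduction to generators of $C^1(\g)$ by bilinearity, which the paper leaves implicit.
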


\begin{proof}
Immediately follows from the formula $d\w(X,Y,Z) = \w([X,Y],Z)- \w([X,Z],Y) + \w([Y,Z],X) = \w([X,Y],Z) = 0$, $\forall X,Y,Z \in \g$.
\end{proof}

\begin{proposition}\label{Prop-2}
For any symplectic form $\w$ on $\g$ and an almost paracomplex structure $J$ compatible with $\w$, we have $\w(C^1\g\oplus J(C^1\g),\ag_1(J)) = 0$.
\end{proposition}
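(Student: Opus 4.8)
The plan is to split the statement into its two halves, $\w(C^1\g,\ag_1(J)) = 0$ and $\w(J(C^1\g),\ag_1(J)) = 0$, and reduce each to Proposition~\ref{Prop-1}. For this I would first record the two structural facts already noted in the text: that $\ag_1(J)\subset \mathcal{Z}(\g)$, and that $\ag_1(J)$ is $J$-invariant. The latter is a one-line check from the definition of $\ag_1(J)$: if $[X,\g]=0$ and $[JX,\g]=0$, then, using $J^2 = Id$, the vector $JX$ satisfies $[JX,\g]=0$ and $[J(JX),\g]=[X,\g]=0$, so $JX\in\ag_1(J)$ as well.

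The first half is then immediate: since $\ag_1(J)\subset\mathcal{Z}(\g)$, Proposition~\ref{Prop-1} gives $\w(C^1\g,\ag_1(J))\subset\w(C^1\g,\mathcal{Z}(\g))=0$.

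For the second half I would take $X\in C^1\g$ and $Y\in\ag_1(J)$ and use the compatibility relation together with $J^2=Id$ to move $J$ off of $C^1\g$:
$$
\w(JX,Y) = \w(JX,J(JY)) = -\w(X,JY).
$$
By the $J$-invariance of $\ag_1(J)$ we have $JY\in\ag_1(J)\subset\mathcal{Z}(\g)$, so $\w(X,JY)=0$ again by Proposition~\ref{Prop-1}, and hence $\w(JX,Y)=0$. Since $X\in C^1\g$ and $Y\in\ag_1(J)$ were arbitrary, this yields $\w(J(C^1\g),\ag_1(J))=0$, and combining the two halves completes the proof.

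I do not expect a genuine obstacle here: the argument is pure bookkeeping with the definition of $\ag_1(J)$, the identity $J^2=Id$, and the compatibility $\w(JU,JV)=-\w(U,V)$. The only subtle point worth stating carefully is that one must invoke the $J$-invariance of $\ag_1(J)$ (not merely its centrality), since it is precisely this that allows $J$ to be transferred from the derived-ideal factor onto the central factor before applying Proposition~\ref{Prop-1}.
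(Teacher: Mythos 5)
Your proof is correct and follows essentially the same route as the paper's: the first half is Proposition~\ref{Prop-1} applied to $\ag_1(J)\subset\mathcal{Z}(\g)$, and the second half uses the $J$-invariance of $\ag_1(J)$ together with the compatibility $\w(JX,JY)=-\w(X,Y)$ to reduce to the first half. You merely spell out the details (including the one-line verification of $J$-invariance) that the paper leaves implicit.
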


\begin{proof}
Since $\ag_1(J)$ belongs to center $\mathcal{Z}(\g)$, then $\w(C^1\g,\ag_1(J)) = 0$. The equality $\w(J(C^1\g),\ag_1(J)) = 0$ follows from the $J$-invariance of $\ag_1(J)$ and formulas $\w(JX,JY) = -\w(X,Y)$.
\end{proof}

\begin{corollary}\label{Cor-1}
For any almost para-K\"{a}hler structure $(\g, \w, g, J)$, the ideal $\ag_1(J)\subset \g_1$ is orthogonal to the subspace $C^1\g\oplus J(C^1\g)$: $g(C^1\g\oplus J(C^1\g),\ag_1(J)) = 0$.
\end{corollary}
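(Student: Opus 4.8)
The plan is to deduce the corollary directly from Proposition~\ref{Prop-2} by translating the statement about the metric $g$ into one about the symplectic form $\w$. The only tool needed is the defining relation $g(X,Y) = \w(X,JY)$ together with the $J$-invariance of $\ag_1(J)$.

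Concretely, I would argue as follows. Take any $u \in C^1\g \oplus J(C^1\g)$ and any $v \in \ag_1(J)$; we must show $g(u,v)=0$. By definition of the para-K\"ahler metric, $g(u,v) = \w(u,Jv)$. Now $\ag_1(J)$ is $J$-invariant: if $v \in \ag_1(J)$ then $[v,\g] \subset \ag_0(J)$ and $[Jv,\g] \subset \ag_0(J)$, and since $J^2 = \mathrm{Id}$ the same two conditions hold with $v$ replaced by $Jv$ (the roles of $v$ and $Jv$ being symmetric in the defining condition), so $Jv \in \ag_1(J)$. Hence $\w(u,Jv)$ is a value of $\w$ on $\bigl(C^1\g \oplus J(C^1\g)\bigr) \times \ag_1(J)$, which vanishes by Proposition~\ref{Prop-2}. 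Therefore $g(u,v)=0$, i.e. $g(C^1\g \oplus J(C^1\g), \ag_1(J)) = 0$.

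There is essentially no obstacle: the content is entirely in Propositions~\ref{Prop-1} and~\ref{Prop-2}, and the corollary is a one-line consequence once one rewrites $g(\,\cdot\,,v) = \w(\,\cdot\,,Jv)$ and uses $Jv \in \ag_1(J)$. The only point worth stating explicitly is the $J$-invariance of $\ag_1(J)$, which (as noted in the excerpt) is built into the construction of the ideals $\ag_k(J)$.
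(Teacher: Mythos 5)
Your proposal is correct and is precisely the argument the paper intends (the corollary is stated without proof, as an immediate consequence of Proposition~\ref{Prop-2}): rewrite $g(u,v)=\w(u,Jv)$, note $Jv\in\ag_1(J)$ by $J$-invariance, and apply Proposition~\ref{Prop-2}. Nothing further is needed.
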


Let $\g$ be a nilpotent Lie algebra and let $\g_0 = \{0\}\subset \g_1 \subset \g_2 \subset \dots \subset \g_{s-1} \subset \g_p = \g$ be the increasing central sequence of ideals.

\begin{proposition}\label{Prop-3}
For any nilpotent almost para-K\"{a}hler structure $J$, the ideal $\ag_{p-1}(J)$ contains $C^1\g\oplus J(C^1\g)$.
\end{proposition}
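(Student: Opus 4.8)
The plan is to strip the $J$ off the statement, reduce to an inclusion between the lower central series and the $\ag$-filtration, and prove that by downward induction. First I would record that each $\ag_k(J)$ is $J$-invariant: its defining condition is symmetric under $X\mapsto JX$ (since $J^2=Id$), so $X\in\ag_k(J)$ forces $JX\in\ag_k(J)$. Consequently $C^1\g\oplus J(C^1\g)\subset\ag_{p-1}(J)$ is equivalent to $C^1\g\subset\ag_{p-1}(J)$ and, unwinding the definition of $\ag_{p-1}(J)$ and using once more that $C^1\g\oplus J(C^1\g)$ is $J$-invariant, also equivalent to $[\,C^1\g\oplus J(C^1\g),\,\g\,]\subset\ag_{p-2}(J)$.

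The core of the argument would be to prove, by downward induction on $j$ from $j=p$ to $j=1$, the inclusion $C^j\g\subset\ag_{p-j}(J)$. The base case $j=p$ reads $C^p\g=\{0\}=\ag_0(J)$ and holds because the nilpotency class of $\g$ equals $p$. For the inductive step, assume $C^{j+1}\g\subset\ag_{p-j-1}(J)$; since this ideal is $J$-invariant, also $J(C^{j+1}\g)\subset\ag_{p-j-1}(J)$. For $X\in C^j\g$ one has $[X,\g]\subset C^{j+1}\g\subset\ag_{p-j-1}(J)$, which is the first half of the membership $X\in\ag_{p-j}(J)$; the second half, $[JX,\g]\subset\ag_{p-j-1}(J)$, follows as soon as one knows the \emph{key inclusion}
\[
[\,J(C^j\g),\,\g\,]\subset C^{j+1}\g\oplus J(C^{j+1}\g),
\]
whose right-hand side lies in $\ag_{p-j-1}(J)$ by the inductive hypothesis. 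The case $j=1$ is precisely $C^1\g\subset\ag_{p-1}(J)$, i.e.\ the proposition; pushing the same reasoning to $j=0$ would additionally give $\ag_p(J)=\g$.

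The main obstacle is the key inclusion. I would approach it via the splitting $\g=\g^+\oplus\g^-$ into the subalgebras coming from integrability of $J$: writing $\pi^\pm\colon\g\to\g^\pm$ for the projections, we have $JW=W-2\pi^-(W)$, hence $[JW,X]=[W,X]-2[\pi^-(W),X]$ with $[W,X]\in C^{j+1}\g$, and $C^j\g\oplus J(C^j\g)=\pi^+(C^j\g)\oplus\pi^-(C^j\g)$; thus the key inclusion is equivalent to $[\pi^-(C^j\g),\g]\subset C^{j+1}\g\oplus J(C^{j+1}\g)$. The difficulty I expect to dominate is exactly that $\pi^-(C^j\g)$ is not a term of the lower central series, so a direct iteration is circular. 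Breaking this should require the $\w$-compatibility of $J$ — Propositions~\ref{Prop-1}, \ref{Prop-2} and Corollary~\ref{Cor-1}, applied not merely to $\ag_1(J)$ but to the whole $\w$-dual filtration $\bg_k:=\ag_k(J)^{\perp_\w}$, which one checks is $J$-invariant, decreasing, satisfies $[\bg_k,\bg_k]\subset\bg_{k-1}$, and ends at $\bg_s=\{0\}$ — combined with the recursive and ideal structure of the $\ag_k(J)$ and, crucially, the \emph{nilpotency} of $J$ (the key inclusion must use it, since for a non-nilpotent integrable compatible $J$ the conclusion of the proposition fails). In practice I would likely replace the plain induction above by a simultaneous downward induction carrying the pair $(C^j\g,\ \bg_{p-j})$ and using the duality $C^j\g\subset\ag_{p-j}(J)\iff\bg_{p-j}\subset(C^j\g)^{\perp_\w}$ at each step.
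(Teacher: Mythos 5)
Your proposal has a genuine gap, and it sits exactly where you flagged it: the ``key inclusion'' $[\,J(C^j\g),\g\,]\subset C^{j+1}\g\oplus J(C^{j+1}\g)$ is never proved, and the machinery you sketch for it (the $\w$-dual filtration $\bg_k$, the simultaneous downward induction on the pair $(C^j\g,\bg_{p-j})$) is left entirely speculative. Since every inductive step --- including the last one, $j=1$, which \emph{is} the proposition --- depends on that inclusion, the proposal does not constitute a proof.

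More importantly, the induction is unnecessary, and you are working in the wrong direction along the filtration. The nilpotency hypothesis on $J$ says precisely that the sequence $\{\ag_k(J)\}$ terminates at $\g$, i.e.\ $\ag_p(J)=\g$. Unwinding the definition
$\ag_p(J)=\{X\in\g \mid [X,\g]\subset\ag_{p-1}(J)\ \text{and}\ [JX,\g]\subset\ag_{p-1}(J)\}$
for an arbitrary $X\in\g$ immediately gives $C^1\g=[\g,\g]\subset\ag_{p-1}(J)$, and then $J(C^1\g)\subset J(\ag_{p-1}(J))=\ag_{p-1}(J)$ by the $J$-invariance you correctly established at the outset. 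That is the paper's entire proof. You instead went \emph{down} the filtration, reducing membership in $\ag_{p-1}(J)$ to a condition involving $\ag_{p-2}(J)$, which manufactures the circularity you then struggle with; the statement falls out by going \emph{up} one step to $\ag_p(J)=\g$. No integrability of $J$, no compatibility with $\w$, and none of Propositions~\ref{Prop-1}--\ref{Prop-2} are needed.
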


\begin{proof}
Since $\g_p =\{X\in \g\, |\, [X,\g] \subset \ag_{p-1}(J)$ and $[JX,\g] \subset \ag_{p-1}(J)\} = \g$, then $C^1\g \subset \ag_{p-1}(J)$. Therefore $J(C^1\g) \subset J(\ag_{p-1}(J)) = \ag_{p-1}(J)$.
\end{proof}

\begin{proposition}\label{Prop-4}
For any left-invariant (pseudo) Riemannian structure $g$ on the nilpotent Lie algebra $\g$, the following properties hold:
\begin{enumerate}
  \item If the vector $X$ belongs to center $\mathcal{Z}(\g)$ of the Lie algebra, then $\nabla_X Y = \nabla_Y X$, $\forall Y \in \g$.
  \item If the vectors $X$ and $Y$ belongs to center of the Lie algebra, then $\nabla_X Y = 0$.
\end{enumerate}
\end{proposition}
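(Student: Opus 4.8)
The plan is to read both assertions directly off the Koszul six-term formula $2g(\nabla_X Y,Z) = g([X,Y],Z) + g([Z,X],Y) + g([Z,Y],X)$ recalled above, combined with the fact that a pseudo-Riemannian metric is by definition nondegenerate, so that $g(V,Z)=0$ for all $Z\in\g$ forces $V=0$.

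For part~1 one does not even need the full formula: since $\nabla$ is the Levi-Civita connection it is torsion-free, i.e. $\nabla_X Y-\nabla_Y X=[X,Y]$ for left-invariant vector fields (the bracket of left-invariant fields being the Lie algebra bracket). If $X\in\mathcal{Z}(\g)$ then $[X,Y]=0$ for every $Y\in\g$, and $\nabla_X Y=\nabla_Y X$ follows at once. If instead one wishes to argue from the six-term formula directly, note that both $[X,Y]$ and $[Z,X]$ vanish when $X$ is central, so $2g(\nabla_X Y,Z)=g([Z,Y],X)=2g(\nabla_Y X,Z)$ for all $Z\in\g$, and nondegeneracy of $g$ concludes the argument.

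For part~2, assume in addition $Y\in\mathcal{Z}(\g)$. Then all three brackets on the right-hand side of the six-term formula vanish: $[X,Y]=0$ since $X$ (equivalently $Y$) is central, $[Z,X]=0$ since $X$ is central, and $[Z,Y]=0$ since $Y$ is central. Hence $g(\nabla_X Y,Z)=0$ for every $Z\in\g$, and nondegeneracy of $g$ gives $\nabla_X Y=0$.

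I do not anticipate a genuine obstacle: both parts are immediate once the six-term formula is available. The only point worth a word of care is that $g$ is merely pseudo-Riemannian, but this is harmless because nondegeneracy — the sole property actually used — still holds; and nilpotency of $\g$ plays no role beyond ensuring, via the standard fact that a nonzero nilpotent Lie algebra has nonzero center, that the statements are not vacuous.
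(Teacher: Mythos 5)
Your proof is correct and follows essentially the same route as the paper: both read the assertions off the Koszul six-term formula, using that all bracket terms involving a central vector vanish and then invoking nondegeneracy of $g$. Your additional observation that part~1 is immediate from torsion-freeness ($\nabla_XY-\nabla_YX=[X,Y]=0$) is a slightly slicker shortcut than the paper's comparison of $2g(\nabla_XY,Z)$ and $2g(\nabla_YX,Z)$, but it amounts to the same fact.
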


\begin{proof}
It follows from the formula $2g(\nabla_X Y,Z) = g([X,Y],Z) + g([Z,X],Y) + g(X,[Z,Y])$ for the covariant derivative $\nabla$ on the Lie group.
Indeed, if the vector $X$ lies at the center of the Lie algebra, then $2g(\nabla_X Y,Z) = g(X,[Z,Y])$ and $2g(\nabla_Y X,Z) = g([Y,X],Z) + g([Z,Y],X) + g(Y,[Z,X]) = g([Z,Y],X)$, $\forall Z\in \g$.
\end{proof}

\begin{corollary}\label{Cor-2}
If the vectors $X,Y$, $Z$ belongs to the center of the Lie algebra, then $R(X,Y)Z = 0$.
\end{corollary}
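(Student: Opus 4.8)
The plan is to obtain this as an immediate consequence of Proposition~\ref{Prop-4}(2) together with the defining formula $R(X,Y) = [\nabla_X,\nabla_Y] - \nabla_{[X,Y]}$, applied to the left-invariant extensions of $X,Y,Z$. First I note that since $X$ and $Y$ lie in $\mathcal{Z}(\g)$, the bracket $[X,Y]$ vanishes, so the term $\nabla_{[X,Y]}Z$ drops out entirely. Next, since any two central vectors have vanishing covariant derivative by Proposition~\ref{Prop-4}(2) (both $X,Z$ and $Y,Z$ are central), we get $\nabla_Y Z = 0$ and $\nabla_X Z = 0$ as left-invariant fields. Hence $\nabla_X(\nabla_Y Z) = \nabla_X 0 = 0$ and likewise $\nabla_Y(\nabla_X Z) = 0$, so that $R(X,Y)Z = \nabla_X\nabla_Y Z - \nabla_Y\nabla_X Z - \nabla_{[X,Y]}Z = 0$.

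There is no real obstacle here; the statement is a direct corollary of the preceding proposition. The only point deserving a word of care is that the curvature formula $R(X,Y) = [\nabla_X,\nabla_Y] - \nabla_{[X,Y]}$ is stated for left-invariant vector fields, so one should first replace $X,Y,Z$ by their left-invariant extensions. Since the center of $\g$ is exactly the set of values at $e$ of the left-invariant fields lying in $\mathcal{Z}(\g)$, this substitution is harmless, and each of the three summands above vanishes identically as a left-invariant vector field. One could equally well invoke Proposition~\ref{Prop-4}(1) to see that $\nabla_X$ annihilates $\nabla_Y Z$, but the argument via part (2) is the shortest.
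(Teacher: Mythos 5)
Your proof is correct and follows exactly the route the paper intends: the corollary is an immediate consequence of Proposition~\ref{Prop-4}(2), since centrality of $X,Y,Z$ kills all three terms $\nabla_X\nabla_YZ$, $\nabla_Y\nabla_XZ$ and $\nabla_{[X,Y]}Z$ in the curvature formula. The remark about working with left-invariant extensions is a reasonable point of care but does not change the substance.
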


Let $J$ be a nilpotent almost para-complex structure on a nilpotent Lie algebra $\g$ and let $\ag_0 = \{0\} \subset \ag_1(J) \subset \dots \subset \ag_{s-1}(J) \subset \ag_s(J)$ be the corresponding sequence of ideals. Note that although the ideals $\ag_k(J)$ are $J$-invariant, they need not be even-dimensional.

\begin{proposition}\label{Prop-5}
If the vector $X$ belongs to the ideal $\ag_1(J)\subset \mathcal{Z}(\g)$ of the Lie algebra, then $\nabla_X Y = \nabla_Y X = 0$, $\forall Y \in \g$.
\end{proposition}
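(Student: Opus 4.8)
The plan is to reduce everything to the covariant-derivative formula from Proposition~\ref{Prop-4} and then kill the remaining term using the orthogonality already established in Corollary~\ref{Cor-1}. Since $\ag_1(J)\subset\mathcal{Z}(\g)$, the vector $X$ is central, so Proposition~\ref{Prop-4}(1) applies and gives $\nabla_X Y=\nabla_Y X$ for all $Y\in\g$; moreover, from the six-term formula with $X$ central one has $2g(\nabla_X Y,Z)=g(X,[Z,Y])$ for every $Z\in\g$. Thus it suffices to show that the right-hand side vanishes identically in $Z$.

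Next I would observe that $[Z,Y]\in C^1\g$ for all $Y,Z$, so $g(X,[Z,Y])$ is a value of the bilinear form $g$ on $\ag_1(J)\times C^1\g$. By Corollary~\ref{Cor-1} (equivalently, Proposition~\ref{Prop-2} combined with $g(U,V)=\w(U,JV)$), the ideal $\ag_1(J)$ is $g$-orthogonal to $C^1\g\oplus J(C^1\g)$, in particular to $C^1\g$. Hence $g(X,[Z,Y])=0$ for all $Y,Z\in\g$, and therefore $2g(\nabla_X Y,Z)=0$ for all $Z\in\g$.

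Finally, since $g$ is a pseudo-Riemannian (hence nondegenerate) metric, $g(\nabla_X Y,Z)=0$ for all $Z$ forces $\nabla_X Y=0$; combined with $\nabla_X Y=\nabla_Y X$ from the first step, this yields $\nabla_X Y=\nabla_Y X=0$ for every $Y\in\g$, as claimed.

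I do not expect a genuine obstacle here: the only points requiring a little care are (i) citing the correct orthogonality statement — it is Corollary~\ref{Cor-1}, which is exactly $g(C^1\g\oplus J(C^1\g),\ag_1(J))=0$ and uses the $J$-invariance of $\ag_1(J)$ and $\w(JX,JY)=-\w(X,Y)$ — and (ii) invoking nondegeneracy of $g$ to pass from the vanishing of $g(\nabla_X Y,Z)$ for all $Z$ to $\nabla_X Y=0$. Both are immediate, so the argument is short.
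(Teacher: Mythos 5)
Your proof is correct and follows essentially the same route as the paper: reduce to $2g(\nabla_X Y,Z)=g(X,[Z,Y])$ using centrality of $X$, then kill the right-hand side via the orthogonality $g(C^1\g\oplus J(C^1\g),\ag_1(J))=0$ of Corollary~\ref{Cor-1}, and conclude by nondegeneracy of $g$. The paper's own proof is just a terser version of the same argument, with $\nabla_Y X=\nabla_X Y$ coming from Proposition~\ref{Prop-4}(1) exactly as you note.
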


\begin{proof}
Let $X\in \ag_1(J) \subset \g_1 = \mathcal{Z}(\g)$ and $Z,Y\in \g$. Then the covariant derivative formula and Corollary \ref{Cor-1} imply that $2g(\nabla_X Y,Z) = g(X,[Z,Y]) = 0$.
\end{proof}

\begin{corollary}\label{Cor-3}
If the vector $X$ belongs to the ideal $\ag_1(J) \subset \g_1 = \mathcal{Z}(\g)$ of the Lie algebra, then $R(X,Y)Z = R(Z,Y)X = 0$, $\forall Y,Z \in \g$.
\end{corollary}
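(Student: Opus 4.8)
The plan is to expand the curvature operator through its definition $R(A,B) = [\nabla_A,\nabla_B] - \nabla_{[A,B]}$ and to annihilate each of the resulting three terms by invoking Proposition~\ref{Prop-5}. Recall that Proposition~\ref{Prop-5} provides two facts about a vector $X\in\ag_1(J)$: first, $\nabla_X W = 0$ for every $W\in\g$; second, $\nabla_W X = 0$ for every $W\in\g$. Moreover $X$ is central, since $\ag_1(J)\subset\g_1=\mathcal{Z}(\g)$, so $[X,W]=0$ for all $W\in\g$.

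For the identity $R(X,Y)Z=0$ I would write $R(X,Y)Z = \nabla_X\nabla_Y Z - \nabla_Y\nabla_X Z - \nabla_{[X,Y]}Z$. The term $\nabla_X\nabla_Y Z$ vanishes because $\nabla_X$ annihilates every vector field; the term $\nabla_Y\nabla_X Z$ vanishes because $\nabla_X Z=0$ already, so $\nabla_Y\nabla_X Z = \nabla_Y 0 = 0$; and $\nabla_{[X,Y]}Z=0$ because $[X,Y]=0$ by centrality of $X$. For the identity $R(Z,Y)X=0$ I would write $R(Z,Y)X = \nabla_Z\nabla_Y X - \nabla_Y\nabla_Z X - \nabla_{[Z,Y]}X$ and observe that $\nabla_Y X$, $\nabla_Z X$ and $\nabla_{[Z,Y]}X$ are all zero by Proposition~\ref{Prop-5} (the last one for the particular choice of base vector $W=[Z,Y]$), so all three terms vanish.

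There is no genuine obstacle here; the statement is an immediate consequence of Proposition~\ref{Prop-5}. The only points worth flagging are that one uses Proposition~\ref{Prop-5} in both of its forms ($\nabla_X$ kills everything, and everything kills $X$), and that centrality of $X$ is precisely what disposes of the $\nabla_{[X,Y]}$ term in the first identity. One could alternatively derive $R(Z,Y)X=0$ from $R(X,Y)Z=0$ by using the skew-symmetry $R(Z,Y)=-R(Y,Z)$ together with the pair symmetry $g(R(A,B)C,D)=g(R(C,D)A,B)$ of the curvature tensor, but the term-by-term computation above is the most economical route and is self-contained given Proposition~\ref{Prop-5}.
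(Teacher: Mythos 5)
Your argument is correct and is essentially the paper's own proof: the paper likewise expands $R(X,Y)Z$ and $R(Z,Y)X$ via the definition of the curvature tensor and kills every term using Proposition~\ref{Prop-5} (together with centrality of $X$ for the $\nabla_{[X,Y]}Z$ term). You have simply written out explicitly the term-by-term cancellations that the paper leaves implicit.
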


\begin{proof}
It follows from $\nabla_X Y = \nabla_Y X = 0$, $\forall Y \in \g$ and the formulas $R(X,Y)Z = \nabla_X(\nabla_Y Z) -\nabla_Y(\nabla_X Z) -\nabla_{[X,Y]}Z$ and $R(Z,Y)X = \nabla_Z(\nabla_Y X)- \nabla_Y(\nabla_Z X)- \nabla_{[Z,Y]}X$.
\end{proof}

\section{Left-invariant para-K\"{a}hler structures on six-dimensional nilpotent Lie algebras} \label{Sec-3}

A classification list of six-dimensional symplectic nilpotent Lie algebras is presented in the article Goze M., Khakimdjanov Y., Medina A. \cite{Goze-Khakim}.
Many Lie algebras in this list have an increasing sequence of ideals $\mathcal{Z}(\g) = \g_1 \subset \g_2 \subset \g_3 = \g$ of dimensions 2, 4, and 6 (we say that such a Lie algebra is of type (2,4,6)).
We choose the complement $A$ to $\g_2$ and the complement $B$ to $\mathcal{Z}(\g)$ in $\g_2$.
Then such a Lie algebra can be represented as a direct sum of two-dimensional subspaces
$$
\g = A \oplus B \oplus \mathcal{Z}(\g).
$$
From the definition of ideals $\g_3 = \{X\in \g \,|\, [X,\g] \subset \g_2\} = \g$ and $\g_2 = \{X \in \g \,|\, [X,\g] \subset \mathcal{Z}(\g)\}$ it immediately follows that the additional subspaces $A$ and $B$ have the following properties:
$$
[A, A] \subset \g_2 = B \oplus \mathcal{Z}(\g),\qquad  [A,B] \subset \mathcal{Z}(\g).
$$
For example, for the algebra $\mathfrak{G}_{21}$ of the list in \cite{Goze-Khakim} with Lie brackets $[e_1,e_2] = e_4$, $[e_1,e_4] = e_6$, $[e_2,e_3] = e_6$ we have: $\mathcal{Z}(\g) = \mathbb{R}\{e_5,e_6\}$, $\g_2 = \mathbb{R}\{e_3, e_4, e_5, e_6\}$, $\g_3 = \g$. Then one can choose as subspaces $A$ and $B$: $B = \mathbb{R}\{e_3, e_4\}$ and $A = \mathbb{R}\{e_1, e_2\}$.

The symplectic structures of the classification list in \cite{Goze-Khakim} show that for algebras of type (2,4,6) additional subspaces $A$ and $B$ can be chosen so that the symplectic form $\w$ on the subspace $B$ is non-degenerate, and the subspaces $A$ and $\mathcal{Z}(\g)$ are $\w$-isotropic and $\w$-dual.
For Lie algebras of other types, instead of $\mathcal{Z}(\g)$ it is necessary to choose a two-dimensional subspace $C\subset \mathcal{Z}(\g)$.

\begin{theorem}\label{Th-1}
Let a six-dimensional symplectic Lie algebra $(\g, \w)$ have a decomposition in the form of a direct sum of two-dimensional subspaces
$$
\g = A \oplus B \oplus C,
$$
where $C\subset \mathcal{Z}(\g)$, $[A, A] \subset B \oplus C$ and $[A,B] \subset C$.
Assume that $B \oplus C$ is an Abelian subalgebra, the subspaces $A$ and $C$ are $\w$-isotropic and $\w$-dual, on subspace $B$ the form $\w$ is non-degenerate and $\w(B \oplus C, C) = 0$.
Then for any nilpotent almost para-complex structure $J$ compatible with $\w$ and for which the subspaces $B$ and $C$ are $J$-invariant, the Levi-Civita connection $\nabla$ of the corresponding pseudo-Riemannian metric $g_J$ has property location:
\begin{enumerate}
  \item $\nabla_XY \in B \oplus C, \, \forall X,Y \in \g$,
  \item $\nabla_XY, \nabla_YX \in C, \, \forall X \in \g, \, \forall Y \in B \oplus C$,
  \item $\nabla_XY = 0, \, \forall X,Y \in B \oplus C$.
\end{enumerate}
\end{theorem}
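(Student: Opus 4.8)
The plan is to reduce all three assertions to the Koszul formula
$2g(\nabla_X Y,Z) = g([X,Y],Z) + g([Z,X],Y) + g([Z,Y],X)$
recorded in the excerpt, after first assembling two pieces of bookkeeping: the shape of the bracket of $\g$ (a consequence of the hypotheses $[A,A]\subset B\oplus C$, $[A,B]\subset C$, $C\subset\cZ(\g)$ and $B\oplus C$ abelian), and the shape of the metric $g=g_J$ together with the relevant $g$-orthogonal complements. From the bracket hypotheses one reads off at once that the only possibly nonzero brackets are those of type $[A,A]$ and $[A,B]$, hence $[\g,\g]\subset B\oplus C$ for all arguments, and $[\g,\,B\oplus C]\subset C$.

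The substantive step is translating the $\w$-conditions into $g$-conditions via the $J$-invariance of $B$ and $C$. Since $g(X,Y)=\w(X,JY)$ and $J$ maps $B$ onto $B$, $C$ onto $C$ isomorphically, one gets: $g|_B$ is nondegenerate (as $\w|_B$ is and $J|_B$ is invertible); $g(C,C)=0$ and $g(B,C)=0$ (because $JC=C$ and $\w(B\oplus C,C)=0$); and $g|_{A\times C}$ is nondegenerate (because $J|_C$ is invertible and $A,C$ are $\w$-dual). Comparing dimensions, these relations pin down the $g$-orthogonal complements $C^{\perp_g}=B\oplus C$ and $(B\oplus C)^{\perp_g}=C$; equivalently, $v\in B\oplus C \iff g(v,C)=0$, and $v\in C \iff g(v,B\oplus C)=0$. (The metric $g_J$ is nondegenerate because $\w$ is and $J$ is invertible, so the Levi-Civita connection is well defined.)

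With this in hand each claim is a direct termwise computation. For (1), take $Z\in C$ in the Koszul formula: $[Z,X]=[Z,Y]=0$ since $C$ is central, and $[X,Y]\in B\oplus C$ is $g$-orthogonal to $C$, so $g(\nabla_X Y,Z)=0$ for every $Z\in C$, i.e. $\nabla_X Y\in C^{\perp_g}=B\oplus C$. For (2), fix $Y\in B\oplus C$ and take $Z\in B\oplus C$: then $[Z,Y]=0$, while $[X,Y]\in C$ and $[Z,X]\in C$, and all three bracket terms pair to zero against $B\oplus C$ under $g$; hence $g(\nabla_X Y,Z)=0$ for all such $Z$, so $\nabla_X Y\in(B\oplus C)^{\perp_g}=C$, and then $\nabla_Y X=\nabla_X Y-[X,Y]\in C$ by torsion-freeness since $[X,Y]\in C$. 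For (3), take $X,Y\in B\oplus C$ and $Z\in\g$ arbitrary: $[X,Y]=0$, and $[Z,X],[Z,Y]\in C$ pair to zero against $Y,X\in B\oplus C$ respectively, so $2g(\nabla_X Y,Z)=0$ for every $Z\in\g$, whence $\nabla_X Y=0$ by nondegeneracy of $g$.

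The computations themselves are routine; the main obstacle — really the only place where care is needed — is the second step, where one must correctly convert each $\w$-hypothesis into the corresponding $g$-statement using the $J$-invariance of $B$ and $C$, and then identify $C^{\perp_g}$ and $(B\oplus C)^{\perp_g}$. It is precisely this identification that turns the geometric assertions ``$\nabla_X Y$ has no $A$-component'' and ``$\nabla_X Y$ has no $A$- or $B$-component'' into the vanishing of explicit Koszul expressions. (Note that nilpotency of $J$ is not used in this particular statement; it enters only through the standing conventions of the section.)
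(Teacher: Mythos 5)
Your proof is correct and follows essentially the same route as the paper: both rest on the Koszul formula together with the translation of the $\w$-hypotheses into $g$-orthogonality statements via the $J$-invariance of $B$ and $C$ (the paper phrases properties 1 and 2 contrapositively by exhibiting a dual vector $JZ$, whereas you identify $C^{\perp_{g}}=B\oplus C$ and $(B\oplus C)^{\perp_{g}}=C$ up front, which is the same computation). Your side remark that nilpotency of $J$ is not actually needed here is also accurate; the paper's proof only uses $C\subset\mathcal{Z}(\g)$ and the $J$-invariance of $C$.
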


\begin{proof}
Property 1. Let $X,Y \in \g$. Then $[X,Y]\subset B \oplus C$. Suppose that $\nabla_XY$ does not belongs to space $B\oplus C$, i.e., has a nonzero component from $A$.
Then there exists a vector $Z \in C \subset \mathcal{Z}(\g)$ such that $\w(\nabla_XY,JZ)\ne 0$.
At the same time, $2\w(\nabla_XY,JZ) = 2g(\nabla_XY,Z) = g([X,Y],Z) + g([Z,X],Y) + g([Z,Y],X) = g([X,Y],Z) = \w([X,Y],JZ) = 0$.
The last equality follows from the property $\w(B\oplus C, C)= 0$.

Property 2. Let now $X \in\g$ and $Y\in B\oplus C$. Then $\nabla_XY$ belongs to space $B\oplus C$.
Assume that $\nabla_XY$ does not belongs to space $C$, i.e., has a nonzero component from $B$.
Then the condition that the form $\w$ is non-degenerate on $B$ and the equality $\w(B\oplus C, C)= 0$ implies that there exists a vector $Z\in B$ such that $JZ\in B$ and $\w(\nabla_XY,JZ) \ne 0$.
At the same time, taking into account the commutativity of $B\oplus C$ and the inclusion $[A,B]\subset C \subset  \ag_1(J) \subset  \mathcal{Z}(\g)$, we have: $2\w(\nabla_XY,JZ)= 2g(\nabla_XY,Z)= g([X,Y],Z) + g([Z,X],Y) + g([Z,Y],X)= g([X,Y],Z)+ g([Z,X],Y) = \w([X,Y],JZ) + \w([Z,X],JY) = 0$.
The last equality follows from the fact that $JY,JZ \in B\oplus C$, $[X,Y], [Z,X] \in C$ and $\w(B\oplus C,C)= 0$.
Thus, $\nabla_XY \in C$.
The inclusion $\nabla_YX \in C$ follows from the formula $\nabla_XY -\nabla_YX = [X,Y]$.

Property 3. Let $X,Y\in B\oplus C$.
Then, for any $Z\in \g$, the following holds in exactly the same way: $2g(\nabla_XY,Z) = g([X,Y],Z) + g([Z,X],Y ) + g([Z,Y],X) = \w([Z,X],JY )+\w([Z,Y],JX) = 0$.
\end{proof}

\begin{corollary}\label{Cor-4}
Under the assumptions of Theorem 1, if the vector $X$ belongs to space $B\oplus C$, then the following equalities hold: $R(X,Y)Z = R(Z,Y)X = 0$ for any $Y,Z\in \g$.
\end{corollary}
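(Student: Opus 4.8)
The plan is to reduce the vanishing of $R(X,Y)Z$ and $R(Z,Y)X$ to the curvature formula $R(U,V)W = \nabla_U\nabla_V W - \nabla_V\nabla_U W - \nabla_{[U,V]}W$ together with the three location properties of $\nabla$ established in Theorem \ref{Th-1}. The key observation is that for $X \in B\oplus C$, Property~2 gives $\nabla_X Y \in C$ and $\nabla_Y X \in C$ for every $Y \in \g$; and by Property~3, once a vector lies in $B\oplus C$, any further covariant derivative involving it as either slot lands in $C$ as well (Property~2 again applied to the $B\oplus C$-argument), and in fact $\nabla_U V = 0$ whenever \emph{both} $U,V \in B\oplus C$. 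So every term in the curvature expression will either vanish outright or be a $\nabla$ of a vector already sitting in $B\oplus C$.

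First I would treat $R(X,Y)Z$ with $X \in B\oplus C$ and $Y,Z \in \g$ arbitrary. Write $R(X,Y)Z = \nabla_X(\nabla_Y Z) - \nabla_Y(\nabla_X Z) - \nabla_{[X,Y]}Z$. In the first term, $\nabla_Y Z \in B\oplus C$ by Property~1, hence $\nabla_X(\nabla_Y Z) = 0$ by Property~3 since $X \in B\oplus C$ too. In the second term, $\nabla_X Z \in C \subset B\oplus C$ by Property~2 (here $X$ plays the $B\oplus C$-slot), so again $\nabla_Y(\nabla_X Z) \in C$; but actually I only need that it is well-defined — wait, I should be careful: Property~3 requires \emph{both} arguments in $B\oplus C$, and here the outer $\nabla$ has $Y \in \g$ arbitrary. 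So instead use Property~2: $\nabla_X Z \in C$ lies in $B\oplus C$, so $\nabla_Y(\nabla_X Z) \in C$ by Property~2 — this term need not vanish by itself. Hmm. Let me reorganize: the clean way is to note $\nabla_X Z \in C \subset \mathcal{Z}(\g)$, and $C \subset \ag_1(J)$ by hypothesis, so Corollary \ref{Cor-3} (applied to $\nabla_X Z \in \ag_1(J)$) is not directly about $\nabla_Y(\cdot)$ — rather I invoke Proposition \ref{Prop-5}: any $W \in \ag_1(J)$ has $\nabla_V W = 0$ for all $V$. Since $\nabla_X Z \in C \subset \ag_1(J)$, we get $\nabla_Y(\nabla_X Z) = 0$. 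Similarly $[X,Y] \in C$ because $X \in B\oplus C$ and $[B\oplus C,\g] \subset C$ (as $B\oplus C$ abelian and $[A,B]\subset C$), so $\nabla_{[X,Y]}Z = 0$ again by Proposition \ref{Prop-5}. And the first term vanished by Property~3. Hence $R(X,Y)Z = 0$.

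For $R(Z,Y)X$ with $X \in B\oplus C$, write $R(Z,Y)X = \nabla_Z(\nabla_Y X) - \nabla_Y(\nabla_Z X) - \nabla_{[Z,Y]}X$. By Property~2, $\nabla_Y X \in C$ and $\nabla_Z X \in C$, and $C \subset \ag_1(J)$, so Proposition \ref{Prop-5} kills $\nabla_Z(\nabla_Y X)$ and $\nabla_Y(\nabla_Z X)$. For the last term, $[Z,Y] \in \g$ is arbitrary, but $X \in B\oplus C$, so $\nabla_{[Z,Y]}X \in C$ by Property~2 — this does not obviously vanish. I would instead rewrite $\nabla_{[Z,Y]}X = \nabla_X[Z,Y] - [[Z,Y],X]$; now $\nabla_X[Z,Y] = 0$ by the same Property~2/Proposition \ref{Prop-5} argument (it lies in $C \subset \ag_1(J)$, and... no, $\nabla_X(\cdot)$ with $X\in B\oplus C$ — use Property~2, which gives $\nabla_X[Z,Y]\in C$, still not zero). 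The genuine finish: $[[Z,Y],X] = 0$ since $X \in B\oplus C$, which is abelian-extended by central $C$, so $[\g, X] \subset C$ is not zero either. So the correct route is the first one: since $X \in B\oplus C = \ag$-region, actually the cleanest is to observe $R(Z,Y)X = -R(Y,Z)X$ does not help, but the pair identity $g(R(Z,Y)X, W) = g(R(X,W)Z,Y)$-type symmetries of the Levi-Civita curvature, combined with the already-proved $R(X,W)\cdot = 0$ for $X\in B\oplus C$, immediately yields $R(Z,Y)X = 0$. \textbf{The main obstacle} is precisely this last point: getting $R(Z,Y)X = 0$ from $R(X,\cdot)\cdot = 0$ cleanly, which I expect to handle via the standard curvature symmetry $g(R(Z,Y)X,W) = g(R(X,W)Z,Y) = 0$ (valid for any pseudo-Riemannian metric), so that nondegeneracy of $g$ forces $R(Z,Y)X = 0$.
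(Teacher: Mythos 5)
Your argument for $R(X,Y)Z=0$ coincides with the paper's: $\nabla_X\nabla_YZ$ dies by Property~3 of Theorem~\ref{Th-1}, while $\nabla_XZ$ and $[X,Y]$ both land in $C\subset\ag_1(J)$, so Proposition~\ref{Prop-5} kills the other two terms. For the second identity you take a genuinely different route, essentially because you overlooked one fact: under the hypotheses of Theorem~\ref{Th-1} the bracket of \emph{any} two elements of $\g$ lies in $B\oplus C$ (this is the opening observation in the proof of Property~1, since $[A,A]\subset B\oplus C$, $[A,B]\subset C$, $C$ is central and $B\oplus C$ is abelian). Hence $[Z,Y]\in B\oplus C$ and $X\in B\oplus C$, so Property~3 gives $\nabla_{[Z,Y]}X=0$ outright — that is exactly how the paper finishes, and it makes the whole detour through $\nabla_X[Z,Y]-[[Z,Y],X]$ unnecessary. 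Your eventual substitute — the pair symmetry $g(R(Z,Y)X,W)=g(R(X,W)Z,Y)$ of the Levi-Civita curvature together with nondegeneracy of $g$ — is nonetheless valid, and it has the merit of exhibiting the second identity as a formal consequence of the first for any pseudo-Riemannian metric; the paper's argument is more elementary (no appeal to the Bianchi-derived symmetry) and stays entirely inside the $\nabla$-location calculus of Theorem~\ref{Th-1}. Either way the corollary is proved; in a final write-up you should delete the abandoned attempts (including the incorrect aside suggesting $[\g,X]$ might fail to be controlled) and present only the symmetry argument, or better, the one-line Property~3 argument.
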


\begin{proof}
Let $X\in B\oplus C$ and let $Y,Z\in \g$.
In the formula $R(X,Y)Z = \nabla_X \nabla_YZ -\nabla_Y \nabla_XZ - \nabla_{[X,Y]}Z$ we have $[X,Y]\in C\subset \ag_1(J)$, so by Proposition \ref{Prop-5}, $\nabla_{[X,Y]}Z=0$.
Further, $\nabla_YZ\in B\oplus C$, so $\nabla_X \nabla_YZ = 0$ by property 3 of the theorem. By property 2 of the theorem, we have $\nabla_XZ \in C\subset \ag_1(J) \subset \mathcal{Z}(\g)$. Then $\nabla_Y\nabla_XZ = 0$ by Proposition \ref{Prop-5}.

Let $X\in B\oplus C$ and $R(Z,Y)X = \nabla_Z\nabla_YX -\nabla_Y\nabla_ZX -\nabla_{[Z,Y]}X$.
We have $[Z,Y]\in B\oplus C$, so $\nabla_{[Z,Y]}X = 0$.
Since $\nabla_YX \in C\subset  \ag_1(J) \subset \mathcal{Z}(\g)$, then $\nabla_Z\nabla_YX = 0$ by Proposition \ref{Prop-5}. Similarly, $\nabla_Y\nabla_ZX =0$. Therefore, $R(Z,Y)X = 0$.
\end{proof}

We choose a basis $\{e_1, e_2, e_3, e_4, e_5,e_6\}$ of the Lie algebra $\g$ adapted to the decomposition $\g = A\oplus B \oplus C$, i.e., such that $A = \mathbb{R}\{e_1, e_2\}$, $B = \mathbb{R}\{e_3, e_4\}$ and $C = \mathbb{R}\{e_5,e_6\}$.

\begin{corollary}\label{Cor-5}
Under the assumptions of Theorem 1, for any $X,Y,Z \in \g$, the inclusion $R(X,Y)Z \in C \subset \mathcal{Z}(\g)$ holds.
In the basis adapted to the expansion $\g = A\oplus B \oplus C$, the curvature tensor can have, up to symmetries, only four non-zero components $R_{1,2,1}^5$, $R_{1,2,1}^6$, $R_{1,2,2}^5$, $R_{1,2,2}^6$. In particular, the Ricci tensor is zero.
\end{corollary}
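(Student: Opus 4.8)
The plan is to deduce the statement purely by bookkeeping from Theorem~\ref{Th-1} and Corollary~\ref{Cor-4}, using only the structural relations on $\g = A\oplus B\oplus C$; no new computation is needed.

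First I would check that $R(X,Y)Z \in C$ for all $X,Y,Z\in\g$. Writing $R(X,Y)Z = \nabla_X\nabla_YZ - \nabla_Y\nabla_XZ - \nabla_{[X,Y]}Z$, property~1 of Theorem~\ref{Th-1} gives $\nabla_YZ\in B\oplus C$, and then property~2 (applied with the middle argument in $B\oplus C$) gives $\nabla_X\nabla_YZ\in C$; symmetrically $\nabla_Y\nabla_XZ\in C$. For the third term, the hypotheses $[A,A]\subset B\oplus C$, $[A,B]\subset C$, $[A,C]=0$ (since $C\subset\mathcal{Z}(\g)$) together with the commutativity of $B\oplus C$ force $[\g,\g]\subset B\oplus C$, so $[X,Y]\in B\oplus C$ and property~2 again yields $\nabla_{[X,Y]}Z\in C$. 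Hence $R(X,Y)Z\in C\subset\mathcal{Z}(\g)$, and in the adapted basis $R_{ijk}^l=0$ whenever $l\notin\{5,6\}$.

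Next I would locate which components can be nonzero. By Corollary~\ref{Cor-4}, $R(e_i,e_j)e_k=0$ as soon as $e_i\in B\oplus C$, hence also when $e_j\in B\oplus C$ by the antisymmetry $R(e_i,e_j)=-R(e_j,e_i)$, and likewise when $e_k\in B\oplus C$ via the $R(Z,Y)X$-form of Corollary~\ref{Cor-4}. Thus a nonzero $R_{ijk}^l$ forces $i,j,k\in\{1,2\}$, and antisymmetry in the first pair leaves only $R(e_1,e_2)e_1$ and $R(e_1,e_2)e_2$; combined with the previous paragraph, the only possibly nonzero components are $R_{1,2,1}^5$, $R_{1,2,1}^6$, $R_{1,2,2}^5$, $R_{1,2,2}^6$. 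Finally, for the Ricci tensor $Ric_{jk}=R_{ijk}^i=\sum_i R_{ijk}^i$, the summand $R_{ijk}^i$ is the $e_i$-component of $R(e_i,e_j)e_k$; since that vector lies in $C$, this coefficient can be nonzero only for $i\in\{5,6\}$, but then $e_i\in C\subset B\oplus C$ and Corollary~\ref{Cor-4} gives $R(e_i,e_j)e_k=0$. Every summand vanishes, so $Ric=0$.

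There is no real obstacle in this argument; the only point requiring attention is the Ricci contraction, where one must play off two complementary facts against each other — that the curvature takes values in $C$, and that Corollary~\ref{Cor-4} annihilates any curvature expression contracted over a basis vector lying in $B\oplus C$ — since neither fact alone forces the trace to vanish.
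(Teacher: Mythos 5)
Your argument is correct and follows essentially the same route as the paper: the inclusion $R(X,Y)Z\in C$ is obtained exactly as in the paper's proof from properties 1 and 2 of Theorem~\ref{Th-1} together with $[\g,\g]\subset B\oplus C$, and the restriction to the four components plus the vanishing of the Ricci trace is the same appeal to Corollary~\ref{Cor-4} that the paper makes, only spelled out in more detail (including the useful observation that the contraction $R_{ijk}^i$ would need $i$ simultaneously in $\{1,2\}$ and in $\{5,6\}$).
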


\begin{proof}
Let $X,Y,Z \in \g$.
In the formula $R(X,Y)Z = \nabla_X \nabla_YZ -\nabla_Y \nabla_XZ - \nabla_{[X,Y]}Z$ we have $[X,Y] \in B \oplus C$, so $\nabla_{[X,Y]}Z \in C$. Further, $\nabla_YZ \in B\oplus C$, so $\nabla_X \nabla_YZ \in C$.
Similarly, $\nabla_Y\nabla_ZX \in C$.
Thus, $R(X,Y)Z\in C$.
The assertion about nonzero components follows from Corollary \ref{Cor-4}.
\end{proof}

Consider the question of which of the Lie algebras in the classification list in \cite{Goze-Khakim} admit compatible para-complex structures $(\w, J)$.
The results are presented in the table of Theorem 2 below.
Each Lie algebra in the table has its number from the list of symplectic Lie algebras in \cite{Goze-Khakim}.
For each symplectic Lie algebra $(\g, \w)$ of this table there exist multiparameter families of para-complex structures $J$ compatible with $\w$.
The table of Theorem 2 shows one of them, the simplest $J$, which is presented in block form in the basis $\{e_1, e_2, e_3, e_4, e_5, e_6\}$ of the Lie algebra and, in accordance with the expansion $\g = \mathbb{R}\{e_1,e_2\}\oplus \mathbb{R}\{e_3,e_4\}\oplus  \mathbb{R}\{e_5,e_6\}$.
The symbol $R$ denotes the Riemann tensor. The dual basis is denoted by the symbols $\{e^1,\dots , e^6\}$.

\begin{theorem}\label{Th-2}
A six-dimensional nilpotent noncommutative Lie algebra admits a para-K\"{a}hler structure $(J, \w)$ if and only if it is symplecto-isomorphic to one of the algebras in the table below. The admissible para-complex structures $J$ are nilpotent, and the corresponding pseudo-Riemannian metrics are Ricci-flat.
\end{theorem}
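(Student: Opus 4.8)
The proof has two halves --- an existence half and a classification (non-existence) half --- and both run over the Goze--Khakimdjanov--Medina list \cite{Goze-Khakim} of six-dimensional symplectic nilpotent Lie algebras. The starting observation is that specifying a para-complex structure $J$ compatible with a symplectic form $\w$ on a six-dimensional $\g$ is the same as specifying an ordered pair of complementary subspaces $\g = \g^+\oplus\g^-$ each of which is an $\w$-Lagrangian subalgebra: the rank condition makes $\dim\g^\pm = 3$, compatibility forces each eigenspace to be $\w$-isotropic and hence Lagrangian, and $N_J = 0$ is exactly the requirement that $\g^\pm$ be subalgebras. Because $\g$ is nilpotent, $\g^+$ and $\g^-$ are three-dimensional nilpotent, so each is $\R^3$ or the Heisenberg algebra $\h_3$. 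Hence the problem reduces to: for which $(\g,\w)$ in the list does $\g$ split as a direct sum of two complementary subalgebras that are $\w$-Lagrangian?

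For the ``if'' direction I would, for each algebra in the table of Theorem~\ref{Th-2}, take the block matrix $J$ displayed there and verify on the given structure constants --- a finite check --- that $J^2 = \mathrm{Id}$, $\dim\g^\pm = 3$, $\w(JX,JY) = -\w(X,Y)$ and $N_J = 0$; this produces the para-K\"{a}hler metric $g_J(X,Y) = \w(X,JY)$. To obtain the two further assertions uniformly --- nilpotency of every compatible $J$ and Ricci-flatness of every $g_J$ --- the plan is to bring each listed $(\g,\w)$ into the setting of Theorem~\ref{Th-1}: exhibit the decomposition $\g = A\oplus B\oplus C$ with $C\subset\mathcal{Z}(\g)$, $[A,A]\subset B\oplus C$, $[A,B]\subset C$, with $B\oplus C$ abelian, $A$ and $C$ being $\w$-dual and $\w$-isotropic, $\w$ nondegenerate on $B$, $\w(B\oplus C,C) = 0$, and argue that $B$ and $C$ can be taken invariant under any compatible $J$ (which I expect to be forced by the way $\mathcal{Z}(\g)$, $C^1\g$ and the $\w$-duality interact); where this fails, a short direct curvature computation fills the gap. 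Then Corollary~\ref{Cor-5} gives $Ric = 0$ directly, and nilpotency of $J$ follows from running the ascending chain $\ag_0(J)\subset\ag_1(J)\subset\cdots$, which stabilizes at $\g$ after a couple of steps once $\ag_1(J)$ is located inside the center.

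The ``only if'' direction is the main obstacle. For every $(\g,\w)$ in \cite{Goze-Khakim} not occurring in the table I would show there is no complementary pair of $\w$-Lagrangian subalgebras. The leverage is Proposition~\ref{Prop-1} (forcing $\w(C^1\g,\mathcal{Z}(\g)) = 0$) together with the rigidity of three-dimensional subalgebras of a six-dimensional nilpotent Lie algebra: enumerate the three-dimensional subalgebras, keep the $\w$-Lagrangian ones, and test pairs for complementarity; the positions of $C^1\g$ and $\mathcal{Z}(\g)$ relative to any such subalgebra cut the search down sharply. Simultaneously one must check that the table is complete and irredundant \emph{as a list of symplectic Lie algebras}, i.e. determine for each surviving underlying Lie algebra its symplectic forms up to Lie-algebra automorphism --- the normal forms of symplectic structures already given in \cite{Goze-Khakim} carry most of this. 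I expect the bulk of the labour, and the likeliest source of slips, to be exactly this bookkeeping: enumerating three-dimensional subalgebras, testing Lagrangianness and complementarity, and sorting the survivors into symplecto-isomorphism classes.

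As an independent cross-check one can synthesize the answer instead: fix $\g^\pm\in\{\R^3,\h_3\}$, let each summand act on the other through cross actions constrained to be mutually $\w$-dual, impose nilpotency of the total bracket, classify the six-dimensional Lie algebras that arise, and match them to \cite{Goze-Khakim}; agreement of this list with the table then confirms both halves of the statement.
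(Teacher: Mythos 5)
Your reduction of a compatible integrable $J$ to an ordered pair of complementary $\w$-Lagrangian subalgebras $\g=\g^+\oplus\g^-$ (each necessarily $\R^3$ or the Heisenberg algebra) is correct, and your plan would work, but on the non-existence half it is a genuinely different route from the paper's. The paper stays entirely at the level of the matrix of $J$: it writes $J^2=\mathrm{Id}$, $\w_{kj}J^k_i+\w_{ik}J^k_j=0$ and $N^k_{ij}=0$ as a polynomial system in the entries $J^i_j$, uses compatibility to eliminate about half the unknowns, and then for each excluded algebra (worked in detail only for $\G_1$) shows the system has no real solutions --- typically because successive Nijenhuis components collapse to perfect squares $(\psi_{ij})^2$ that force entries to vanish until a nonzero constant such as $N^6_{24}=4$ appears. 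Your alternative --- parametrize the three-dimensional subalgebras, keep the Lagrangian ones using $\w(C^1\g,\mathcal{Z}(\g))=0$ as leverage, and test pairs for transversality --- is more geometric and exposes the obstruction as a statement about subalgebras rather than as an elimination computation; the price is that ``enumerating'' three-dimensional subalgebras means sweeping a subvariety of a Grassmannian, which is no less laborious and no less prone to bookkeeping slips. On the existence half the two arguments essentially coincide: verify the tabulated $J$, invoke Theorem \ref{Th-1} and Corollary \ref{Cor-5} for Ricci-flatness in every case except $\G_6$, and treat $\G_6$ by a direct curvature computation --- your caveat that a direct computation ``fills the gap'' where the hypotheses of Theorem \ref{Th-1} fail is exactly the $\G_6$ exception. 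One caution: your expectation that $J$-invariance of $B$ and $C$ is \emph{forced} for every compatible $J$ is not established in the paper either (it verifies the hypotheses only for the listed representatives and exhibits the general five-parameter family only for $\G_6$), so if you want nilpotency and Ricci-flatness for \emph{all} admissible $J$ rather than the tabulated ones, that invariance is a claim you would still have to prove.
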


{\small
\begin{longtable}[H]{|l|l|l|}
%\begin{tabular}{|l|l|}
\hline
$\g$ & Lie brackets & Para-K\"{a}hler structure \\

\hline
$\G_6$ &
\begin{tabular}{l}
  $[e_1,e_2]= e_3$,\\
  $[e_1,e_3] = e_4$,\\
  $[e_1,e_4] = e_5$,\\
  $[e_2,e_3] = e_6$,
\end{tabular} &
\begin{tabular}{l}
  $\omega =e^1\wedge e^6 + e^2\wedge e^4 + e^2\wedge e^5 -e^3\wedge e^4$,\\
$J=\left(
     \begin{array}{cc}
       1 & 0 \\
       0 & -1 \\
     \end{array}
   \right) \times
\left(
     \begin{array}{cc}
       -1 & 0 \\
       0 & 1 \\
     \end{array}
   \right) \times
\left(
     \begin{array}{cc}
       1 & 0 \\
       0 & -1 \\
     \end{array}
   \right)$, \\
 $R\ne 0$
 \end{tabular}
\\

\hline
$\G_{10}$ &
\begin{tabular}{l}
  $[e_1,e_2]= e_4$,\\
  $[e_1,e_4] = e_5$,\\
  $[e_1,e_3] = e_6$,\\
  $[e_2,e_4] = e_6$,
\end{tabular} &
\begin{tabular}{l}
  $\omega =e^1\wedge e^6 + e^2\wedge e^5 -e^3\wedge e^4 -e^2\wedge e^6$,\\
$J=\left(\begin{array}{cc}
       1 & a \\
       0 & -1 \\
     \end{array}
   \right) \times
\left(\begin{array}{cc}
       -1 & 0 \\
       -\frac{a(a+2)}{2(a+1)} & 1 \\
     \end{array}
   \right) \times
\left(\begin{array}{cc}
       1 & -a-2 \\
       0 & -1 \\
     \end{array}
   \right)$,  $R\ne 0$ when $a\ne 0$
 \end{tabular}
\\

\hline
$\G_{11}$ &
\begin{tabular}{l}
  $[e_1,e_2]= e_4$,\\
  $[e_1,e_4] = e_5$,\\
  $[e_2,e_3] = e_6$,\\
  $[e_2,e_4] = e_6$,
\end{tabular} &
\begin{tabular}{l}
  $\omega=e^1\wedge e^6 + e^2\wedge e^5 - e^3\wedge e^4 +\lambda e^2\wedge e^6$,\\
$J=\left(\begin{array}{cc}
       -1 & -2\lambda  \\
       0 & 1 \\
     \end{array}
   \right) \times
\left(\begin{array}{cc}
       1 & a \\
       0 & -1 \\
     \end{array}
   \right) \times
\left(\begin{array}{cc}
       -1 & 0 \\
       0 & 1 \\
     \end{array}
   \right)$,  $R\ne 0$ when $a\ne 1$
 \end{tabular}
\\

\hline
$\G_{12}$ &
\begin{tabular}{l}
  $[e_1,e_2]= e_4$,\\
  $[e_1,e_4] = e_5$,\\
  $[e_2,e_3] = -e_5$,\\
  $[e_1,e_3] = e_6$, \\
  $[e_2,e_4] = e_6$
\end{tabular} &
\begin{tabular}{l}
  $\omega=-e^1\wedge e^5 + \lambda e^2\wedge e^6 + (\lambda-1) e^3\wedge e^4$,\\
$J=\left(\begin{array}{cc}
       0 & \lambda a \\
       \frac{1}{\lambda a} & 0 \\
     \end{array}
   \right) \times
\left(\begin{array}{cc}
       0 & \frac{\lambda a^2 -1}{a(\lambda +1)} \\
       \frac{a(\lambda +1)}{\lambda a^2 -1} & 0 \\
     \end{array}
   \right) \times
\left(\begin{array}{cc}
       0 & \frac{1}{a} \\
       a & 0 \\
     \end{array}
   \right)$,  $R\ne 0$
 \end{tabular}
\\

\hline
$\G_{13}$ &
\begin{tabular}{l}
  $[e_1,e_2]= e_4$,\\
  $[e_1,e_3] = e_5$,\\
  $[e_1,e_4] = e_6$,\\
  $[e_2,e_3] = e_6$
\end{tabular} &
\begin{tabular}{l}
  $\omega_1= e^1\wedge e^6 + \lambda e^2\wedge e^5 +(\lambda-1) e^3\wedge e^4$,    $\lambda \ne 0, \lambda \ne 1$\\
$J_1=\left(\begin{array}{cc}
       1 & 0 \\
       a & -1 \\
     \end{array}
   \right) \times
\left(\begin{array}{cc}
       1 & 0 \\
       -a(\lambda +1) & -1 \\
     \end{array}
   \right) \times
\left(\begin{array}{cc}
       1 & 0 \\
       -a\lambda & -1 \\
     \end{array} \right), \ R = 0$ \\
  $\omega_2= e^1\wedge e^6 + e^2\wedge e^4 +\frac 12 e^2\wedge e^5+\frac 12 e^3\wedge e^4$,  $\lambda \ne 0, \lambda \ne 1$ \\
$J_2=\left(\begin{array}{ccc}
       1 & 0 & 0\\
       0 & -1 & 0\\
       0 & 0 & 1 \\
     \end{array}
   \right) \times
\left(\begin{array}{ccc}
       -1 & 0 & 0\\
       4 & 1 & 0\\
       0 & 0 & -1 \\
     \end{array}
   \right), \ R \ne 0 $
\end{tabular}
\\

\hline
$\G_{14}$ &
\begin{tabular}{l}
  $[e_1,e_2]= e_4$,\\
  $[e_1,e_3] = e_5$,\\
  $[e_1,e_4] = e_6$
\end{tabular} &
\begin{tabular}{l}
  $\omega=e^1\wedge e^6 +  e^2\wedge e^5 + e^3\wedge e^4$ \\
$J=\left(\begin{array}{cc}
       a & b \\
       \frac{1 -a^2}{b} & -a \\
     \end{array}
   \right) \times
\left(\begin{array}{cc}
       a & -b \\
       \frac{a^2-1}{b} & -a \\
     \end{array}
   \right) \times
\left(\begin{array}{cc}
       a & -b \\
       \frac{a^2-1}{b} & -a \\
     \end{array} \right), \ R = 0$
 \end{tabular}
\\

\hline
$\G_{15}$ &
\begin{tabular}{l}
  $[e_1,e_2]= e_4$,\\
  $[e_2,e_3] = e_5$,\\
  $[e_1,e_4] = e_6$
\end{tabular} &
\begin{tabular}{l}
  $\omega=-e^1\wedge e^5 +  e^1\wedge e^6 + e^2\wedge e^5$ \\
$J=\left(\begin{array}{cc}
       1 & 0 \\
       a & -1 \\
     \end{array}
   \right) \times
\left(\begin{array}{cc}
       1 & 0 \\
       \frac{a(a-2)}{2} & -1 \\
     \end{array}
   \right) \times
\left(\begin{array}{cc}
       1 & 0 \\
       2-a & -1 \\
     \end{array} \right), \ R \ne 0$
 \end{tabular}
\\

\hline
$\G_{16}$ &
\begin{tabular}{l}
  $[e_1,e_2]= e_5$,\\
  $[e_1,e_4] = -e_5$,\\
  $[e_1,e_3] = e_6$, \\
  $[e_2,e_4] = e_6$
\end{tabular} &
\begin{tabular}{l}
  $\omega =e^1\wedge e^6 + e^2\wedge e^3 -e^4\wedge e^5$ \\
$J=e_1\otimes e^1 - e_2\otimes e^2 + e_3\otimes e^3 - e_4\otimes e^4 + e_5\otimes e^5 - e_6\otimes e^6+ $ \\
$ +\frac{4-a^2}{2a} e_2\otimes e^3 + a\,e_4\otimes e^1 +  a\, e_6\otimes e^5,  \ R \ne 0$
 \end{tabular}
\\

\hline
$\G_{17}$ &
\begin{tabular}{l}
  $[e_1,e_3]= e_5$,\\
  $[e_1,e_4] = e_6$,\\
  $[e_2,e_3] = e_6$
\end{tabular} &
\begin{tabular}{l}
  $\omega= e^1\wedge e^6 + e^2\wedge e^5 + e^3\wedge e^4$ \\
$J=\left(\begin{array}{cc}
       1 & 0 \\
      \frac{a-1}{b} & -1 \\
     \end{array}
   \right) \times
\left(\begin{array}{cc}
       a & b \\
       \frac{1-a^2}{b} & -a \\
     \end{array}
   \right) \times
\left(\begin{array}{cc}
       1 & 0 \\
       \frac{1-a}{b} & -1 \\
     \end{array} \right), \ R = 0$
 \end{tabular}
\\

\hline
$\G_{18}$ &
\begin{tabular}{l}
  $[e_1,e_2]= e_4$,\\
  $[e_1,e_3] = e_5$,\\
  $[e_2,e_3] = e_6$
\end{tabular} &
\begin{tabular}{l}
  $\omega_1= e^1\wedge e^6 + \lambda e^2\wedge e^5 + (\lambda-1) e^3\wedge e^4$, $\lambda\ne 0$ and $\ne 1$ \\
$J_1=\left(\begin{array}{cc}
       1 & 0 \\
      0 & 1 \\
     \end{array} \right) \times
\left(\begin{array}{cc}
       -1 & 0 \\
       a & 1 \\
     \end{array} \right) \times
\left(\begin{array}{cc}
       -1 & 0 \\
       0 & -1 \\
     \end{array} \right) $, $R=0$ \\
 $\omega_2= e^1\wedge e^5 + \lambda e^1\wedge e^6 -\lambda e^2\wedge e^5 + e^2\wedge e^6 -2\lambda e^3\wedge e^4$, $\lambda\ne 0$ \\
$J_2=J_1$,  $R = 0$,\\
 $\omega_3= -e^1\wedge e^6 + e^2\wedge e^5 +2 e^3\wedge e^4 + e^3\wedge e^5$, \\
$J_3=\left(\begin{array}{ccc}
      1 & 0 & 0 \\
      0 & 1 & 2 \\
      0 & 0 & -1 \\
     \end{array} \right) \times
\left(\begin{array}{ccc}
      1 & 0 & 0 \\
      0 & -1 & 0 \\
      0 & 0 & -1 \\
     \end{array} \right)$, $R=0$ \\
 \end{tabular}
\\

\hline
$\G_{19}$ &
\begin{tabular}{l}
  $[e_1,e_2]= e_4$,\\
  $[e_1,e_4] = e_5$,\\
  $[e_1,e_5] = e_6$
\end{tabular} &
\begin{tabular}{l}
  $\omega= e^1\wedge e^3 + e^2\wedge e^6 - e^4\wedge e^5$ \\
$J=\left(\begin{array}{cc}
       1 & 0 \\
      0 & -1 \\
     \end{array}
   \right) \times
\left(\begin{array}{cc}
       -1 & 0 \\
       0 & -1 \\
     \end{array}
   \right) \times
\left(\begin{array}{cc}
       1 & 0 \\
       0 & 1 \\
     \end{array} \right), \ R \ne 0$
 \end{tabular}
\\

\hline
$\G_{21}$ &
\begin{tabular}{l}
  $[e_1,e_2]= e_4$,\\
  $[e_1,e_4] = e_6$,\\
  $[e_2,e_3] = e_6$
\end{tabular} &
\begin{tabular}{l}
  $\omega= e^1\wedge e^6 + e^2\wedge e^5 - e^3\wedge e^4$ \\
$J=\left(\begin{array}{cc}
       a & b \\
      \frac{1-a^2}{b} & -a \\
     \end{array}
   \right) \times
\left(\begin{array}{cc}
       a & -b \\
       \frac{a^2-1}{b} & -a \\
     \end{array}
   \right) \times
\left(\begin{array}{cc}
       a & -b \\
       \frac{a^2-1}{b} & -a \\
     \end{array} \right), \ R \ne 0$
 \end{tabular}
\\

\hline
$\G_{23}$ &
\begin{tabular}{l}
  $[e_1,e_2]= e_5$,\\
  $[e_1,e_3] = e_6$
\end{tabular} &
\begin{tabular}{l}
  $\omega_1= e^1\wedge e^6 + e^2\wedge e^5 + e^3\wedge e^4$  \\
$J_1=\left(\begin{array}{cc}
      -1 & 0 \\
      a & 1 \\
     \end{array} \right) \times
\left(\begin{array}{cc}
       1 & 0 \\
       b & -1 \\
     \end{array} \right) \times
\left(\begin{array}{cc}
       -1 & 0 \\
       -a & 1 \\
     \end{array} \right) $, $R=0$ \\
 $\omega_2= e^1\wedge e^4 + e^2\wedge e^6 + e^3\wedge e^5$ \\
$J_2=\left(\begin{array}{cc}
      -1 & 0 \\
      0 & 1 \\
     \end{array} \right) \times
\left(\begin{array}{cc}
       1 & 0 \\
       0 & -1 \\
     \end{array} \right) \times
\left(\begin{array}{cc}
       -1 & 0 \\
       0 & 1 \\
     \end{array} \right) $, $R=0$ \\
$\omega_3= e^1\wedge e^4 + e^2\wedge e^5 - e^3\wedge e^6$, \\
$J_3=\left(\begin{array}{cc}
      1 & 0 \\
      0 & -1 \\
     \end{array} \right) \times
\left(\begin{array}{cc}
       -1 & 0 \\
       0 & -1 \\
     \end{array} \right) \times
\left(\begin{array}{cc}
       1 & 0 \\
       0 & 1 \\
     \end{array} \right)$, $R=0$
 \end{tabular}
\\

\hline
$\G_{24}$ &
\begin{tabular}{l}
  $[e_2,e_3]= e_5$,\\
  $[e_1,e_4] = e_6$
\end{tabular} &
\begin{tabular}{l}
  $\omega= e^1\wedge e^6 + e^2\wedge e^5 + e^3\wedge e^4$  \\
$J=\left(\begin{array}{cc}
      1 & a \\
      0 & -1 \\
     \end{array} \right) \times
\left(\begin{array}{cc}
       1 & a^2/2 \\
       0 & -1 \\
     \end{array} \right) \times
\left(\begin{array}{cc}
       1 & -a \\
       0 & -1 \\
     \end{array} \right)$, $R\ne 0$
 \end{tabular}
\\

\hline
$\G_{25}$ &
\begin{tabular}{l}
  $[e_1,e_2]= e_6$
\end{tabular} &
\begin{tabular}{l}
  $\omega= e^1\wedge e^6 + e^2\wedge e^5 + e^3\wedge e^4$  \\
$J=\left(\begin{array}{cc}
      1 & 0 \\
      a & -1 \\
     \end{array} \right) \times
\left(\begin{array}{cc}
       1 & 0 \\
       b & -1 \\
     \end{array} \right) \times
\left(\begin{array}{cc}
       1 & 0 \\
       -a & -1 \\
     \end{array} \right)$, $R=0$
 \end{tabular}
\\

\hline

\caption{Para-K\"{a}hler six-dimensional nilpotent Lie algebras}
\end{longtable}
}

\begin{proof}
A left-invariant para-K\"{a}hler structure is a pair $(\w, J)$ consisting of a symplectic form $\w$ and an integrable almost para-complex structure $J$ compatible with $\w$ on the Lie algebra $\g$.
Since the symplectic form is given in \cite{Goze-Khakim}, the operator $J$ must have the following properties: $J^2 = Id$, $\w(JX,JY) = -\w(X,Y)$ and $[X,Y] + [JX,JY] - J[JX,Y] -J[X,JY] = 0$.
We write the compatible condition $\w(JX,JY) = -\w(X,Y)$ as $\w(JX,Y) + \w(X,JY) = 0$.
Let the symplectic form $\w$ and operator $J$ have matrices $\w_{ij}$ and $J_j^i$, $J=J_j^i e_i\otimes e^j$ in the basis $\{e_1,\dots , e_6\}$ of the Lie algebra.
Then the system of equations for finding the para-K\"{a}hler structure $(\w, J)$ consists of the following equations for the variables :
$$
\left\{
\begin{tabular}{l}
$J_k^i\, J^k_j = \delta_j^i,$  \\
$\w_{kj}J^k_i+\w_{ik}J^k_j=0$, \\
$J_i^l J_j^m C_{lm}^k-J_i^l J_m^k C_{lj}^m-J_j^l J_m^k C_{il}^m+C_{ij}^k=0$,
 \end{tabular}
\right.
$$
where $\delta_j^i$ is the identity matrix, $C_{ij}^k$ are the structure constants of the Lie algebra, and the indices vary from 1 to 6.
For the Lie algebras included in the table of Theorem 2, it is easy to see that the reduced almost para-complex structures $J$ are compatible with the symplectic forms $\w$.
The integrability of $J$ immediately follows from the fact that the proper subspaces $\g^+$ and $\g^-$ are subalgebras.
For all Lie algebras in this list, with the exception of $\G_6$, the conditions of Theorem 1 are satisfied. Therefore, the corresponding para-K\"{a}hler metrics for them are Ricci flat.

Let us consider in more detail the Lie algebra $\G_6$ with symplectic form $\omega =e^1\wedge e^6 + e^2\wedge e^4 + e^2\wedge e^5 -e^3\wedge e^4$. For the almost para-complex structure
$J=\left(     \begin{array}{cc}
       1 & 0 \\
       0 & -1 \\
     \end{array} \right) \times
\left(\begin{array}{cc}
       -1 & 0 \\
       0 & 1 \\
     \end{array} \right) \times
\left(\begin{array}{cc}
       1 & 0 \\
       0 & -1 \\
     \end{array} \right)$
we have the following sequence of ideals: $\ag_1(J) = \mathcal{Z}(\g) = \mathbb{R}\{e_5, e_6\}$, $\ag_2(J) = \mathbb{R}\{e_4, e_5, e_6\}$, $\ag_3(J) = \mathbb{R}\{e_3, e_4, e_5, e_6\}$, $\ag_4(J) = \g$. Therefore the $J$ is nilpotent.
The eigenspaces $\g^+$ and $\g^-$ are formed by the vectors $\{e_1, e_4, e_5\}$ and $\{e_2, e_3, e_6\}$. It is easy to see that they are subalgebras.
Therefore, the almost para-complex structure $J$ is integrable.
The compatible condition $\w(JX,JY) =-\w(X,Y)$ is obviously satisfied.
Using Maple system, the curvature tensor is easily calculated. It has the following non-zero components: $R_{1,2,1}^4=1$, $R_{1,2,2}^6=1$, $R_{1,2,3}^6=-1$, $R_{1,3,1}^5=-1$, $R_{1, 3,2}^6=-1$.
Therefore, the para-K\"{a}hler metric is also Ricci-flat.
Note that the most general para-K\"{a}hler structure for a given Lie algebra $\G_6$ depends on five parameters and has the form:
$$
J=
\left(
  \begin{array}{cccccc}
    1 & 0 & 0 & 0 & 0 & 0 \\
    0 & -1 & 0 & 0 & 0 & 0 \\
    J_1^3 & 0 & -1 & 0 & 0 & 0 \\
    -\frac{J_1^3 J_3^4}{2} & J_2^4 & J_3^4 & 1 & 0 & 0 \\
    \frac{J_1^3 (J_2^4+J_3^4}{2} & J_2^5 & -J_2^4-J_3^4 & 0 & 1 & 0 \\
    J_1^6 & \frac{J_1^3 J_2^4}{2} & \frac{J_1^3 J_3^4}{2} & J_1^3 & 0 & -1 \\
  \end{array}
\right), \
g_J=
\left(
  \begin{array}{cccccc}
 J_1^6 & \frac{J_1^3 J_2^4}{2}& \frac{J_1^3 J_3^4}{2}&J_1^3&0&-1 \\
    \frac{J_1^3 J_2^4}{2}&J_2^4+J_2^5&-J_2^4&1&1&0 \\
    \frac{J_1^3 J_3^4}{2}&-J_2^4&-J_3^4&-1&0&0 \\
    J_1^3&1&-1&0&0&0 \\
    0&1&0&0&0&0 \\
    -1&0&0&0&0&0 \\
  \end{array}
\right).
$$

For all Lie algebras in the list of work \cite{Goze-Khakim}, which are not included in the table of Theorem 2, the system of equations for the para-K\"{a}hler structure has no real solutions.
Consider, for example, the Lie algebra $\G_1$ with Lie brackets
$[e_1,e_2] = e_3$, $[e_1,e_3] = e_4$, $[e_1,e_4] = e_5$, $[e_1,e_5] = e_6$, $[e_2,e_3] = e_5$, $[e_2,e_4] = e_6$, and structure constants
$C_{12}^3=1$, $C_{21}^3=-1$, $C_{13}^4=1$, $C_{31}^4=-1$, $C_{14}^5=1$, $C_{41}^5=-1$, $C_{15}^6=1$, $C_{51}^6=-1$, $C_{23}^5=1$, $C_{32}^5=-1$, $C_{24}^6=1$, $C_{42}^6=-1$.
The symplectic form is $\w = e_1\wedge e_6 + (1-\lambda)e_2\wedge e_5 + \lambda e_3\wedge e_4$, where $\lambda \neq 0$ and $\lambda \neq 1$.
Consider a matrix of almost paracomplex structure $J=(J_j^i)$, whose elements are denoted by symbols $\psi_{ij}$, $J_j^i=\psi_{ij}$.
From the compatibility condition $\w_{kj} J_i^k+\w_{ik} J_j^k=0$ we obtain the following form of the matrix
$$
J=\left[ \begin {array}{cccccc} {\it \psi_{11}}&{\it \psi_{12}}&{\it \psi_{13}}&{\it \psi_{14}}&{\it \psi_{15}}&{\it \psi_{16}}\\ \noalign{\medskip}{\it \psi_{21}}&{
\it \psi_{22}}&{\it \psi_{23}}&{\it \psi_{24}}&{\it \psi_{25}}&{\frac {{\it \psi_{15}}}{1-
\lambda}}\\ \noalign{\medskip}{\it \psi_{31}}&{\it \psi_{32}}&{\it \psi_{33}}&{\it \psi_{34}}&{\frac { \left( 1-\lambda \right) {\it \psi_{24}}}{\lambda}}&{\frac {{\it \psi_{14}}}{\lambda}}\\
\noalign{\medskip}{\it \psi_{41}}&{\it \psi_{42}}&{\it \psi_{43}}&-{\it \psi_{33}}&{\frac { \left( \lambda-1 \right) {\it \psi_{23}}}{\lambda}}&-{\frac {{\it \psi_{13}}}{\lambda}}\\
\noalign{\medskip}{\it \psi_{51}}&{\it \psi_{52}}&{\frac {\lambda{\it \psi_{42}}}{1-\lambda}}&{\frac {\lambda{\it \psi_{32}}}{\lambda-1}}&-{\it \psi_{22}}&{\frac {{\it \psi_{12}}}{\lambda-1}}\\
\noalign{\medskip}{\it \psi_{61}}& \left( 1-\lambda \right) {\it \psi_{51}}&\lambda{\it \psi_{41}}&-\lambda{\it \psi_{31}}& \left(\lambda-1 \right) {\it \psi_{21}}&-{\it \psi_{11}}
\end {array} \right].
$$

Now consider the system of integrability equations $N_{ij}^k=J_i^l J_j^m C_{lm}^k -J_i^l J_m^k C_{lj}^m-J_j^l J_m^k C_{il}^m+ C_{ij}^k=0$.
Let's calculate, for example, the element $N_{56}^1$:
$$
N_{56}^1=J_5^l J_6^m C_{lm}^1-J_5^l J_m^1 C_{l6}^m-J_6^l J_m^1 C_{5l}^m+C_{56}^1=-J_6^l J_m^1 C_{5l}^m=-J_6^1 J_6^1 C_{51}^6=(J_6^1 )^2=(\psi_{16} )^2.
$$
We obtain that $N_{56}^1=0$ when $\psi_{16}=0$. Under this condition, we obtain
$$
N_{45}^1=J_4^l J_5^m C_{lm}^1-J_4^l J_m^1 C_{l5}^m-J_5^l J_m^1 C_{4l}^m +C_{45}^1=-J_4^l J_m^1 C_{l5}^m-J_5^l J_m^1 C_{4l}^m=
$$
$$
=-J_4^1 J_6^1 C_{15}^6-J_5^1 J_5^1 C_{41}^5=-J_5^1 J_5^1 C_{41}^5=J_5^1 J_5^1=(\psi_{15})^2.
$$
We obtain that $N_{45}^1=0$ when $\psi_{15}=0$.
Under this condition, we obtain analogically
 $N_{15}^3=(\psi_{14})^2$ and $N_{35}^2=(\psi_{25})^2$.
If $\psi_{14}=0$ and $\psi_{25}=0$, then $N_{23}^1=(\psi_{13})^2$.
If $\psi_{13}=0$, then $N_{26}^6=(\psi_{12})^2/(\lambda-1)$, $N_{51}^2=((\psi_{24})^2 (1-\lambda))/\lambda$, $N_{35}^6=((\psi_{23})^2 (1-\lambda))/\lambda$.
If $\psi_{12}=0$, $\psi_{24}=0$, $\psi_{23}=0$, then $N_{41}^3=(\psi_{34})^2=0$ with $\psi_{34}=0$.
Under this conditions, we obtain the following three simple equations: $N_{13}^4 =2\psi_{11}\psi_{33} +(\psi_{33})^2+1=0$, $N_{23}^5=(\psi_{22})^2 +2\psi_{22}\psi_{33}+1=0$, $N_{15}^6=(\psi_{11})^2 -2\psi_{11}\psi_{22}+1=0$.
Their solutions are easy to find: $\psi_{11}=1, \psi_{22}=1, \psi_{33}=-1$, or $\psi_{11}=-1, \psi_{22}=-1, \psi_{33}=1$.
If they are taken into account, then some elements of $N_{ij}^k$ are non-zero constants.
For example, in the first case, when $\psi_{11}=1$, $\psi_{22}=1$, $\psi_{33}=-1$, we get $N_{24}^6=4$.
Indeed, in respect that $J_2^1=J_4^1=J_4^3=J_4^2=0$ and $J_2^2=\psi_{22}=1$, $J_4^4=-\psi_{33}=1$, $J_6^6=-\psi_{11}=-1$, we obtain:
%$$
%N_{ij}^k=J_i^l J_j^m C_{lm}^k -J_i^l J_m^k C_{lj}^m-J_j^l J_m^k C_{il}^m+ C_{ij}^k=0
%$$
$$
N_{24}^6=J_2^l J_4^m C_{lm}^6-J_2^l J_m^k C_{l4}^m-J_4^l J_m^k C_{2l}^m +C_{24}^6=
%$$
%$$
J_2^1 J_4^5 C_{15}^6+J_2^5 J_4^1 C_{51}^6+J_2^2 J_4^4 C_{24}^6+J_2^4 J_4^2 C_{42}^6 -
$$
$$
-J_2^1 J_5^6 C_{14}^5 -J_2^2 J_6^6 C_{24}^6
-J_4^1 J_3^6 C_{21}^3-J_4^3 J_5^6 C_{23}^5-J_4^4 J_6^6 C_{24}^6+1=
$$
$$
=J_2^2 J_4^4-J_2^2 J_6^6-J_4^4 J_6^6+1= 1+1+1+1=4\neq 0.
$$

\end{proof}

\end{document}